\documentclass{amsart}
\usepackage{}
\usepackage{color}
\usepackage{amssymb}

\newcommand{\s}{\sigma}
\newcommand{\la}{\lambda}

\newtheorem{theorem}{Theorem}[section]
\newtheorem{lemma}[theorem]{Lemma}

 \theoremstyle{definition}

\theoremstyle{remark}

\numberwithin{equation}{section}

\begin{document}

\title[Liouville-type theorem]
{A Liouville-type theorem for $2$-Monge-Amp\`ere equation in 
dimension three}

\author{Weisong Dong}
\address{School of Mathematics, Tianjin University,
	Tianjin, 300354, China}
\email{dr.dong@tju.edu.cn}

\begin{abstract}
	
We prove that every entire solution with quadratic growth, lying in a suitable cone, to the 2-Monge-Amp\`ere equation on $\mathbb{R}^3$ is a quadratic polynomial. The proof proceeds by first establishing a concavity inequality, and then deriving a Pogorelov-type interior $C^2$ estimate.

	\emph{Mathematical Subject Classification (2020):} 35B45, 35B53.

	\emph{Keywords:} 2-Monge-Amp\`ere equation; Liouville-type theorem; Pogorelov-type interior $C^2$ estimate.

\end{abstract}

\maketitle

\section{Introduction}

The $p$-Monge-Amp\`ere operator is an important operator that arises in many geometric problems. 
In its simplest form, for a smooth function $u:\mathbb{R}^n \rightarrow \mathbb{R}$, it is defined as follows
\[
m_{p}(\lambda (D^2 u)) := \prod_{1\leq i_1 < \cdots < i_{p} \leq n} 
(\la_{i_1} + \cdots + \la_{i_p}), \; \forall\; 1\leq p\leq n,
\]
where $\lambda(D^2 u) = (\lambda_1, \ldots, \lambda_n)$ are the eigenvalues of the Hessian $D^2u$. When $p=1$, the operator coincides with the Monge-Amp\`ere operator, which arises in the Weyl problem, the Minkowski problem, and in optimal transportation. When $p=n$, it reduces to the standard Laplacian.
The case of $p=n-1$ is particularly important, which arises in the context of the Gauduchon conjecture and the form-type Calabi-Yau equation; see \cite{Gau, GN, P, STW, T-W3} and \cite{FWW1, FWW2}, respectively.
The operator is also closely connected to the concept of 
$p$-plurisubharmonic functions, as introduced by Harvey-Lawson in \cite{HL}.

For solutions to the Laplacian equation $\Delta u = 1$ on $\mathbb{R}^n$, Liouville's theorem tells us that if $u - |x|^2/2n$ is bounded from below, then $u -|x|^2/2n \equiv C$, that is, $u$ is a quadratic polynomial.
Let $u$ be a smooth convex solution to the Monge-Amp\`ere equation $\det(D^2 u) = 1$ 
on $\mathbb{R}^n$. Then, by a classical result of J\"orgens \cite{Jo}, Calabi \cite{Ca} and Pogorelov \cite{Po}, $u$ must be a quadratic polynomial; see also \cite{CYau, TruW}.
This result still holds if $u$ is a viscosity solution; see Caffarelli-Li \cite{CL}.
In this paper, we establish a Liouville-type theorem for smooth entire solution $u$ on $\mathbb{R}^3$ to the following $2$-Monge-Amp\`ere equation
\begin{equation}
	\label{2-MA}
	m_2 (\lambda(D^2 u)) = (\la_1 + \la_2) (\la_1 + \la_3) (\la_2 + \la_3) = 1, 
\end{equation} 
assuming that the solution satisfies a cone condition.
Let $\lambda_{\max}$ and $\lambda_{\min}$ denote the maximum and minimum eigenvalues, respectively, of the Hessian matrix $D^2 u$.
The cone condition is defined as follows
\[
P_2^{1/2} := \Big\{ \lambda  \in \mathbb{R}^3 : \lambda_{i_1} + \la_{i_2} > 0, \; \forall\; i_1 \neq i_2,\; \mbox{and}\; \la_{min} \geq -\frac{1}{2} \la_{max} \Big\}.
\]
We say that a smooth function $u$ is in the $P_2^{1/2}$ cone if $\lambda(D^2 u)\in P_2^{1/2}$.
A function $u:\mathbb{R}^3 \rightarrow \mathbb{R}$ is said to satisfy \emph{quadratic growth condition} if there exist positive constants $C_1$, $C_2$, and $R_0$ such that
\begin{equation}
\label{QG}
u(x) \geq C_1|x|^2 - C_2, \;\forall\; |x| \geq R_0.
\end{equation}

Our main theorem is stated as follows:
\begin{theorem}\label{thm}
	Suppose $u:\mathbb{R}^3 \rightarrow \mathbb{R}$ is a smooth solution in the cone $P_2^{1/2}$ to the equation \eqref{2-MA} satisfying the quadratic growth condition \eqref{QG}. Then, $u$ is a quadratic polynomial.
\end{theorem}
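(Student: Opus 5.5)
The plan follows the classical Pogorelov route of Jörgens--Calabi--Pogorelov and its $\sigma_k$ analogues. The key tool is a Pogorelov-type interior $C^2$ estimate on sublevel sets, applied after rescaling.

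\textbf{Step 1: rescale.} For $R$ large set
\[
u_R(y) = R^{-2} u(Ry).
\]
Then $u_R$ solves \eqref{2-MA} and stays in $P_2^{1/2}$. Consider the section $\Omega_R=\{y: u_R(y) < 1\}$. By \eqref{QG}, after adding a constant and subtracting an affine function if needed, $\Omega_R$ lies in a fixed ball $B_{C}$. Also, $u_R$ is bounded in $C^1$ on $\Omega_R$ uniformly in $R$.

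Convexity, or at least a substitute for it, is needed in Step 1. In $P_2^{1/2}$ we have $\lambda_i+\lambda_j>0$, so $\Delta u>0$ and $u$ is subharmonic. Moreover $\lambda_{\min}\ge -\frac12\lambda_{\max}$. I would first check that sublevel sets are bounded: subharmonicity together with quadratic growth from below gives that $\{u<t\}$ is bounded. I would then obtain a gradient bound on $\{u<t/2\}$ from interior estimates for solutions of the concave elliptic equation. For these equations the gradient bound on sublevel sets follows, as in the $\sigma_k$ literature, from the maximum principle applied to $|Du|^2$ weighted by $(t-u)$.

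\textbf{Step 2: Pogorelov estimate.} This is the heart of the matter. We want
\[
(1-u_R)^{\beta}\,\lambda_{\max}(D^2u_R)\le C \quad \text{on } \Omega_R,
\]
with $C$ independent of $R$. Work with $F(D^2u)=\log m_2(\lambda)$, or with $m_2^{1/3}$, which is concave on the cone. Apply the maximum principle to
\[
\phi=\beta\log(1-u)+\log\lambda_{\max}+\frac{a}{2}|Du|^2 .
\]
At a maximum point, differentiate the equation twice. This produces the third-order term $-F^{pq,rs}u_{11p}u_{11q}$ (after replacing $u_{11}$ by a perturbation to handle multiplicity) together with $F^{ii}u_{11i}^2/u_{11}^2$. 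The main obstacle is controlling the bad term $F^{ii}u_{11i}^2/u_{11}^2$ for $i\neq1$.

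The paper's abstract signals what is needed: a concavity inequality specific to $m_2$ in dimension three. Writing $\mu_{ij}=\lambda_i+\lambda_j$, so that $\log m_2=\sum\log\mu_{ij}$, the inequality I would aim to prove is
\[
-\sum_{p\ne q}\frac{\partial^2 \log m_2}{\partial\lambda_p\partial\lambda_q}\,\xi_p\xi_q+\sum_{i\ge2}\frac{2F^{ii}\xi_i^2}{\lambda_1-\lambda_i}\ \ge\ (1+\epsilon)\sum_{i\ge 2}\frac{F^{ii}\xi_i^2}{\lambda_1}-C F^{11}\frac{\xi_1^2}{\lambda_1^2},
\]
for $\lambda\in P_2^{1/2}$ with $\lambda_1$ large. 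The hypothesis $\lambda_{\min}\ge-\frac12\lambda_{\max}$ should be exactly what makes $\lambda_1-\lambda_i\le\frac32\lambda_1$ and $F^{ii}\gtrsim F^{11}$. Since there are only three eigenvalues, this can be checked by an explicit, if tedious, case analysis on the sign of $\lambda_3$.

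The $i=1$ term is absorbed using the first-order condition
\[
\beta u_1/(1-u)=-u_{111}/u_{11}-a u_1u_{11},
\]
with $\beta$ large. The term $a\sum F^{ii}u_{ii}^2$ then gives the decisive positive term. Finally, the sum $\sum F^{ii}$ is bounded below, via the cone condition, by a positive constant, and this yields the bound.

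\textbf{Step 3: conclude.} Applying the estimate on $\{u_R<1/2\}$, which contains a fixed ball around the origin, gives $|D^2u_R|\le C$, hence $|D^2u|\le C$ on $\mathbb{R}^3$. The equation is uniformly elliptic and concave there, since with bounded eigenvalues and $m_2=1$ each $\mu_{ij}$ is bounded below. Evans--Krylov then gives
\[
[D^2u_R]_{C^\alpha(B_{1/4})}\le C,
\]
so
\[
[D^2u]_{C^\alpha(B_{R/4})}\le CR^{-\alpha}\to0 .
\]
Hence $D^2u$ is constant and $u$ is a quadratic polynomial.
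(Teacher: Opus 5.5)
\textbf{The gap is the uniform gradient bound.} Your Pogorelov step depends on the $\tfrac a2|Du|^2$ term. The $i=1$ term $F^{11}u_{111}^2/\lambda_1^2$ is removed through the critical equation, which turns it into $\beta^2F^{11}u_1^2/(1-u)^2+a^2F^{11}u_1^2\lambda_1^2$. These are beaten by $a\sum F^{ii}\lambda_i^2\ge aF^{11}\lambda_1^2\ge a\lambda_1/2$ only if $\sup|Du_R|$ is bounded independently of $R$.

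You assert that bound but do not prove it, and the argument you indicate does not give it for $m_2$. At an interior maximum of $\log|Du|^2+\gamma\log(t-u)$, $Du$ lies in an eigenspace $S$ of $D^2u$ with eigenvalue $\mu=\gamma|Du|^2/(2(t-u))$. Writing $F_S$ for the common value of $F^{ii}$, $i\in S$, the maximum principle gives only
\[
\frac{2}{|Du|^2}\Big[\Big(|S|-2-\frac{2}{\gamma}\Big)\mu^2F_S+\sum_{i\notin S}F^{ii}\lambda_i^2\Big]\le\frac{3\gamma}{t-u}.
\]
Take $S=\{1\}$ and $\lambda_2\approx\lambda_3\approx 1/(2\lambda_1^2)$. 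Then $\mu^2F_S\approx 2\lambda_1$ while $\sum_{i\ne1}F^{ii}\lambda_i^2\to0$, so the bracket is negative and the inequality says nothing. Subharmonicity plus the lower quadratic bound makes sublevel sets bounded, but it gives no Lipschitz control of a non-convex function. This is exactly the obstruction the paper points out: Dinew's Pogorelov estimate depends on $\|Du\|_{C^0}$, which is why he must assume quadratic growth from above.

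The paper is built to avoid $Du$ entirely. Lemma \ref{Pogrelov} uses the test function $\log\lambda_{\max}+\beta\log(-u)+\frac12|x|^2$, and the decisive positive term is $\sum M_2^{ii}\ge 2/(\lambda_2+\lambda_3)\ge\lambda_1^2/2$ from Lemma \ref{key-2}. The critical equation can then no longer be applied to $u_{111}$, since that would bring $u_1$ back. So the $i=1$ term has to be absorbed by third-order terms, and this is the content of Lemma \ref{key}: $-\sum F^{pp,qq}\xi_p\xi_q+2\sum_{i>1}\frac{F^{ii}\xi_i^2}{\lambda_1-\lambda_i}\ge(1+\epsilon)\frac{F^{11}\xi_1^2}{\lambda_1}$ with $\xi=(u_{111},u_{221},u_{331})$.

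Your version of the key inequality targets the wrong terms. The $i\ge2$ terms $F^{ii}u_{11i}^2/\lambda_1^2$ are handled in one line by the off-diagonal part $\frac{f_p-f_q}{\lambda_p-\lambda_q}u_{pq1}^2$ of $F^{pq,rs}u_{pq1}u_{rs1}$, not by mixed $\lambda$-derivatives of $\log m_2$. Because $\lambda_\ell\ge-\lambda_1/2$, this gives $2\sum_{\ell>1}\frac{F^{\ell\ell}u_{11\ell}^2}{\lambda_1-\lambda_\ell}\ge\frac43\sum_{\ell>1}\frac{F^{\ell\ell}u_{11\ell}^2}{\lambda_1}$, with no case analysis.

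In fact, if you had a uniform gradient bound, your Chou--Wang-type scheme would close using only this elementary estimate and the concavity of $m_2^{1/3}$ (with $\sum f_pu_{pp1}=0$), and Lemma \ref{key} would not be needed. So the gradient bound is the whole difficulty. You must either supply it, or replace $\frac a2|Du|^2$ by $\frac12|x|^2$ and prove an inequality like Lemma \ref{key}.

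A minor point: $\{u_R<1/2\}$ need not contain a fixed ball unless you have an upper bound on $u$. Argue instead, as the paper does, that $\{u<R^2/2\}$ exhausts $\mathbb{R}^3$ as $R\to\infty$.
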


The crucial step here is to prove a concavity inequality for the equation (see Lemma \ref{key}).
It would be interesting to prove the above theorem for solutions lying in the more natural cone defined by
\[
P_2 := \{ \lambda  \in \mathbb{R}^3 : \lambda_{i_1} + \la_{i_2} > 0, \; \forall\; i_1 \neq i_2 \},
\]
since the equation is elliptic for solutions in this cone.
However, the method presented in this paper does not apply to this case.
It would also be interesting to prove the theorem for entire solutions of the $p$-Monge-Amp\`ere equations on $\mathbb{R}^n$ belonging to the cone $P_p$.
Our result generalizes a theorem of Chu and Dinew \cite{CD}, in which the authors proved the statement for semi-convex solutions; that is, there exists a uniform constant $K > 0$ such that $\lambda_{min} \geq - K$.
The main difference in our approach is the establishment of a key concavity inequality for the operator $m_2$ in $\mathbb{R}^3$. Once this is achieved, we derive a Pogorelov-type interior $C^2$ estimate for the solution, following the approach of Li-Ren-Wang \cite{LRW} for the $k$-Hessian equation. The theorem is then proved by adapting the arguments developed by Bao-Chen-Guan-Ji in \cite{BCGJ}. 

A Pogorelov-type interior $C^2$ estimate for the $p$-Monge-Amp\`ere equation in $\mathbb{R}^n$ was established by Dinew \cite{D}, with the bound depending on $||Du||_{C^0}$. Henceforth, he also requires that the solution has quadratic growth from above in order to prove the Liouville-type theorem.
Such an estimate was independently obtained by Chu and Jiao \cite{CJ} for the $n-1$-Monge-Amp\`ere equation, where the right-hand side depends on $Du$.
Dong \cite{D} then generalized the result in \cite{CJ} to the case
$p \geq n/2$. 
In \cite{CJ} and \cite{D}, the authors also proved the existence of $p$-convex hypersurfaces with prescribed curvature in Euclidean space.
The prescribed mean curvature equation for $p$-convex hypersurfaces was explored by Han, Ma, and Wu \cite{HMW}.
In hyperbolic space, a recent paper by Chen, Sui, and Sun \cite{CSS} investigates the Plateau problem for $2$-convex hypersurfaces.

It is also worth mentioning that there are other types of fully nonlinear elliptic equation related to this topic, such as the $k$-Hessian equations, 
\begin{equation}
	\label{sigma-k}
	\s_k (\la (D^2 u) ) = \sum_{1 \leq i_1 < \cdots < i_k \leq n} 
	\la_{i_1} \cdots \la_{i_k} = 1,
\end{equation}
which have been extensively studied in recent years.
The equation is naturally considered in the following G{\aa}rding cone corresponding to the operator, $\Gamma_{k} :=\left\{\lambda \in \mathbb{R}^{n} : \s_{j}(\lambda)>0,\; j=1, \ldots, k\right\}$.
Bao-Chen-Guan-Ji \cite{BCGJ} proved the Liouville-type theorem for convex solutions to the equation \eqref{sigma-k}, assuming a quadratic growth condition from below. Li, Ren, and Wang \cite{LRW} extended the result to solutions lying in the $\Gamma_{k+1}$ cone.
Recently, Zhang \cite{Zhang} further refined the result to the case of semi-convex solutions.
Liu and Ren in \cite{LR} proved a Liouville-type theorem for sum-type Hessian equations. 
A necessary and sufficient condition for locally strictly convex solutions of \eqref{sigma-k} to be quadratic functions was obtained by Du \cite{Du}.
Some counterexamples (see \cite{Li, Warren}) show that the quadratic growth condition is, in general, necessary.
It should be noted that when $k=n-1$, the Liouville-type theorem for \eqref{sigma-k} was proved by Tu \cite{Tu} under the assumption $\lambda(D^2u) \in \Gamma_{n-1}$ and a quadratic growth condition. 
For $k=2$ and $n=3$, it was solved by Chen and Xiang \cite{CX}. 
In a breakthrough paper, Shankar and Yuan \cite{SY} proved a Liouville-type theorem for semiconvex solutions of the equation $\sigma_2(\lambda) = 1$ without assuming the quadratic growth condition,
thereby settling the question posed in \cite{CY}.
The authors further developed their results in \cite{SY2}.
We refer the reader to Fan \cite{Fan} for a generalization to the case of a variable right-hand side function.
A central step in the work \cite{LRW, SY, SY2, Zhang, Fan} is likewise the proof of a concavity inequality (Jacobi inequality). 
We refer the reader to \cite{GQ,LT} for additional results, and to Lu \cite{Lu} for related work on Hessian quotient equations.

The rest of the paper is organized as follows.
In Section 2, we introduce some properties of the $2$-Monge-Amp\`ere operator in $\mathbb{R}^3$. We also prove the key inequality for the $2$-Monge-Amp\`ere equation in $\mathbb{R}^3$.
In Section 3, we prove Theorem \ref{thm}.

\textbf{Acknowledgements}:
This work was carried out while the author was a visitor at the Chern Institute of Mathematics, Nankai University, in 2024, and at IMAG and the Department of Geometry and Topology, University of Granada, in 2025. The author thanks Prof. Weiping Zhang and Prof. José Gálvez for their support, and gratefully acknowledges the warm hospitality of these institutes. His visit to Granada was supported by a scholarship from the China Scholarship Council (No. 202406250051).

\section{Preliminary Lemmas}

This section is devoted to preliminary calculations that will be used in the proof of the main theorem.
Let $u$ be a smooth function on $\mathbb{R}^3$.
For simplicity, we denote 
\[
F (D^2 u) = f (\lambda (D^2 u)) =\left ( \lambda _{1} +\lambda _{2}  \right ) \left ( \lambda _{1} +\lambda _{3}  \right )\left ( \lambda _{2} +\lambda _{3}  \right ).
\]
Let us introduce the following notation:
\[
F^{i j} = \frac{\partial F}{\partial u_{i j}}\;
\mbox{and}\;
F^{i j,r s} = \frac{\partial^2 F}{\partial u_{i j}\partial u_{r s}}.
\]
When $u_{ij}$ is diagonal, we have the following formulas:
\[
F^{i j} =  f_i \delta_{ij},\; \mbox{where}\; f_i = \frac{\partial f}{\partial \lambda_i},
\]
and
\[
F^{i j, r s} \eta_{i j } \eta_{r s } = \sum f_{ij} \eta_{i i } \eta_{j  j } + \sum_{p\neq q}\frac{f_p - f_q}{\lambda_p - \lambda_q} | \eta_{p q}|^2
\]
for any symmetric matrix $\{\eta_{ij}\}$. See \cite{A, Ball, G, Spruck}.

The following formulas will be useful in our computations:
\begin{lemma}
	\label{Df}
	For any $1\leq i, j, k \leq 3$ and $i \neq j \neq k \neq i$, we have
	\[
	f_i = \sigma_1^2 - \la_i^2,
	\]
	\[
	f_{ii} = 2 (\sigma_1 - \la_i ),
	\]
	\[
	 f_{ij} = \frac{f_{i}f_{j}}{f} - \frac{f}{\left(\lambda_{i} + \lambda_{j}\right)^2},
	\]
	and
	\[
	 f_{ii} = \frac{f_{i}^{2}}{f} - \frac{f}{\left(\lambda_{i} + \lambda_{j}\right)^2} - \frac{f}{\left(\lambda_{i} + \lambda_{k}\right)^2}.
	\]
\end{lemma}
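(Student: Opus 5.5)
The plan is to verify all four identities by direct differentiation. The third and fourth are most cleanly handled through the logarithm of $f$, since $f$ is a product of linear factors. Fix distinct indices $i,j,k$ and write
\[
a=\la_i+\la_j,\quad b=\la_i+\la_k,\quad c=\la_j+\la_k,\qquad f=abc .
\]
Note that $c=\s_1-\la_i$ and $a+b=2\la_i+\la_j+\la_k=\s_1+\la_i$.

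\textbf{First two formulas.} Only the factors $a$ and $b$ depend on $\la_i$, and each has $\partial_{\la_i}=1$. By the product rule,
\[
f_i = c(a+b) = (\s_1-\la_i)(\s_1+\la_i)=\s_1^2-\la_i^2 .
\]
Differentiating $c(a+b)$ once more in $\la_i$, the factor $c$ is constant and $\partial_{\la_i}(a+b)=2$. This gives $f_{ii}=2c=2(\s_1-\la_i)$.

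\textbf{Last two formulas.} I will use $\log f=\log a+\log b+\log c$, valid on $P_2$ where all factors are positive. Then
\[
\frac{f_{pq}}{f}-\frac{f_pf_q}{f^2}=\partial_{\la_p}\partial_{\la_q}\log f .
\]
Each factor $\log(\la_s+\la_t)$ has second derivative $-(\la_s+\la_t)^{-2}$ in any pair of variables it contains, and $0$ otherwise.

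For $p=i$, $q=j$, only $a$ contains both $\la_i$ and $\la_j$. Hence $\partial_{\la_i}\partial_{\la_j}\log f=-1/a^2$, and multiplying by $f$ gives
\[
f_{ij}=\frac{f_if_j}{f}-\frac{f}{(\la_i+\la_j)^2}.
\]
For $p=q=i$, the factors $a$ and $b$ contain $\la_i$. Hence $\partial_{\la_i}^2\log f=-1/a^2-1/b^2$, which yields the fourth identity.

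As a consistency check, both expressions for $f_{ii}$ should agree. Using $f_i/f=1/a+1/b$,
\[
\frac{f_i^2}{f}-\frac{f}{a^2}-\frac{f}{b^2}=\frac{2f}{ab}=2c ,
\]
which matches the second formula. There is no genuine obstacle here. The only point needing care is that the logarithmic computation requires $f>0$, which holds in $P_2$. Alternatively, one can clear denominators and verify the identities as polynomial identities, which then hold wherever $a,b\neq 0$.
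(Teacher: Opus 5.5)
Your proof is correct. The first two formulas are derived exactly as in the paper, but for the last two your organization differs.

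The paper writes $f_1 = f/(\la_1+\la_2) + f/(\la_1+\la_3)$ and computes $f_{11} = 2(\s_1-\la_1)$ and $f_{12} = 2\s_1$ explicitly. It then expands $f_1^2$ and $f_1 f_2$ term by term, recognizing the cross terms as $f f_{11}$ and $f f_{12}$.

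You instead read both identities off $\partial_{\la_p}\partial_{\la_q}\log f = f_{pq}/f - f_pf_q/f^2$. Only the linear factors containing both $\la_p$ and $\la_q$ contribute, and each such factor $\ell$ contributes $-\ell^{-2}$. Your version is shorter and shows structurally where the correction terms $f/(\la_i+\la_j)^2$ come from. It also carries over verbatim to $m_p$ on $\mathbb{R}^n$, or to any product of linear forms.

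The paper's expansion, on the other hand, gives the explicit value $f_{ij} = 2\s_1$ for $i\neq j$ as a by-product. This value is used later, in Step 1 of the proof of Lemma \ref{key}, through $f_{12} = f_{13}$. It is easy to recover from your formula as well: since $f_j = b(a+c)$, you get $f_if_j/f - f/a^2 = ((a+b)(a+c) - bc)/a = a+b+c = 2\s_1$.
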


\begin{proof}
Without loss of generality, we shall prove the lemma for the case $i = 1$ and $j = 2$.
Recall that 
\[
\s_1 = \la_1 + \la_2 + \la_3.
\]
The first order derivative of $f$ with respect to $\la_1$ is given as below
\[
	f_{1} 
	=  \frac{f}{\la_1 + \la_2} + \frac{f}{\la_1 + \la_3}
	= \sigma_{1}^2 - \lambda_{1}^2.
\]
The second order derivatives are given by
\[
f_{11} = \frac{2f}{( \lambda_{1} + \lambda_{2} ) ( \lambda_{1} + \lambda_{3} )} = 2 (\sigma_1 -\la_1)
\]
and
\[
f_{12} = 2 \s_1.
\]
A direct calculation shows that
\[
f_{1}^{2} = \frac{f^{2}}{\left ( \lambda_{1} + \lambda_{2}\right )^2} + \frac{f^{2}}{\left ( \lambda_{1} + \lambda_{3}\right )^2} + f_{11}\cdot f.
\]
Note that
\[
f_{2} = \frac{f}{\lambda_{2} + \lambda_{3} } + \frac{f}{\lambda_{1} + \lambda_{2}}.
\]
We obtain that
\[
f_{1} f_{2} = \frac{f^2}{(\la_1 + \la_2) (\la_2 + \la_3)} +
\frac{f^2}{(\lambda_1 + \lambda_2)^2} 
+ \frac{f^2}{(\la_1 + \la_3) (\la_2 + \la_3)} + \frac{f^2}{(\la_1 + \la_2) (\la_1 + \la_3)}.
\]
Hence, we conclude that
\[
\frac{f_{1} f_{2}}{f} 
 =  2\sigma_{1} + \frac{f}{\left(\lambda_{1} + \lambda_{2} \right)^2}. 
\]
The lemma is proved.

\end{proof}

Now we introduce the following notation:
\[
b_{ii} :=\sum_{k\ne i}\frac{f}{\left(\lambda_{i}+\lambda_{k}\right)^2},
\]
and
\[
b_{ij} :=\frac{f}{\left(\lambda_{i}+\lambda_{j}\right)^2} = b_{ji}, \;
\mbox{for}\; i \ne j.
\]
Then, by Lemma \ref{Df}, at a point where $D^2 u$ is diagonal, we have
\begin{equation}
	\label{fii}
\frac{F^{ii}F^{ii}}{F} = F^{ii,ii}+b_{ii}, 
\end{equation}
and
\begin{equation}
	\label{fij}
\frac{F^{ii}F^{jj}}{F} = F^{ii,jj}+b_{ij}, \; \mbox{for} \; i \ne j.
\end{equation}

\begin{lemma}
	\label{key-2}
    Suppose that $\lambda (D^2 u) = (\la_1, \la_2, \la_3) \in P_2^{1/2}$ 
    satisfies equation \eqref{2-MA}. Without loss of generality, assume $\la_1 = \max_{1\leq i \leq 3}{\la_i}$. 
	Then
	\[
	0 < \lambda_2 + \lambda_3 \leq \frac{4}{\lambda_1^2}.
	\]
\end{lemma}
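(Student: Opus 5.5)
Positivity of $\lambda_2+\lambda_3$ comes directly from the definition of $P_2^{1/2}$. For the upper bound, the plan is to use the equation
\[
(\lambda_2+\lambda_3)(\lambda_1+\lambda_2)(\lambda_1+\lambda_3)=1
\]
and show that $(\lambda_1+\lambda_2)(\lambda_1+\lambda_3)\geq \lambda_1^2/4$. Then $\lambda_2+\lambda_3 = 1/[(\lambda_1+\lambda_2)(\lambda_1+\lambda_3)] \leq 4/\lambda_1^2$.

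Since $\lambda_1=\lambda_{\max}$, the cone condition $\lambda_{\min}\geq -\frac12\lambda_{\max}$ gives $\lambda_j\geq -\frac12 \lambda_1$ for $j=2,3$. We also need $\lambda_1>0$. If $\lambda_1\le 0$, then all $\lambda_i\le 0$, which contradicts $\lambda_2+\lambda_3>0$. Hence $\lambda_1+\lambda_j\geq \frac12\lambda_1>0$ for $j=2,3$, and multiplying the two inequalities yields
\[
(\lambda_1+\lambda_2)(\lambda_1+\lambda_3)\geq \tfrac14\lambda_1^2,
\]
which finishes the argument.

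There is no real obstacle here; the only care needed is to justify that $\lambda_1>0$, so that the two lower bounds $\lambda_1+\lambda_j\geq\frac12\lambda_1$ are positive and can legitimately be multiplied.
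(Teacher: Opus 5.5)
Your proposal is correct and follows the paper's argument: the paper writes $1 = (\lambda_1+\lambda_2)(\lambda_1+\lambda_3)(\lambda_2+\lambda_3) \geq \frac{\lambda_1^2}{4}(\lambda_2+\lambda_3)$ using $\lambda_j \geq -\frac12\lambda_1$. Your version also makes explicit the checks the paper leaves implicit, namely $\lambda_1>0$ and $\lambda_1+\lambda_j\geq\frac12\lambda_1>0$.
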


\begin{proof}
	We have
	\[
	1 = (\la_1 + \la_2)(\la_1 + \la_3)(\la_2 + \la_3) \geq \frac{\la_1^2}{4} (\la_2 + \la_3).
	\]
	The lemma is proved.
\end{proof}

We now establish our key concavity inequality.
\begin{lemma}
	\label{key}
    Suppose that $\lambda (D^2 u) = (\la_1, \la_2, \la_3) \in P_2^{1/2}$ 
    satisfies equation \eqref{2-MA}. Without loss of generality, assume $\la_1 = \max_{1\leq i \leq 3}{\la_i}$. Then, at a point where $D^2u$ is diagonal, for sufficiently large $\la_1$, we have
	\[
	- \sum F^{pp,qq} \xi_p \xi_q + 2\sum_{i > 1} \frac{F^{ii}\xi_i^2}{\lambda_1 -\lambda_i} - (1+\epsilon) \frac{F^{11} \xi_1^2}{\lambda_1} \geq 0,
	\]
	where $\epsilon = 1/9$ and $\xi = \left(u_{111},u_{221},u_{331}\right)$.
\end{lemma}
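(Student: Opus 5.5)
The plan is to rewrite the left-hand side, using \eqref{fii}--\eqref{fij}, as an explicit quadratic form in $\xi$ that has a large positive part, and then to estimate it by weighted Cauchy--Schwarz. I will also use the differentiated equation. Since the right-hand side of \eqref{2-MA} is constant, differentiating in $x_1$ at a diagonal point gives $\sum_p f_p\xi_p=\sum_p F^{pp}u_{pp1}=0$. I take this constraint to be part of the hypotheses, as it is in the intended application.

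\textbf{Step 1 (rewriting).} By \eqref{fii}, \eqref{fij} and $f=1$,
\[
-\sum F^{pp,qq}\xi_p\xi_q=\sum_{p,q}b_{pq}\xi_p\xi_q-\Big(\sum_p f_p\xi_p\Big)^2 .
\]
The last term vanishes by the constraint. Because $b_{ii}=\sum_{k\ne i}b_{ik}$, the remaining form is
\[
b_{12}(\xi_1+\xi_2)^2+b_{13}(\xi_1+\xi_3)^2+b_{23}(\xi_2+\xi_3)^2 .
\]
So it suffices to show
\[
Q:=\sum_{i<j}b_{ij}(\xi_i+\xi_j)^2+\sum_{i>1}\frac{2f_i\xi_i^2}{\lambda_1-\lambda_i}\;\ge\;(1+\epsilon)\frac{f_1\xi_1^2}{\lambda_1}.
\]

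\textbf{Step 2 (orders of magnitude).} Set $a=\lambda_1+\lambda_2$, $b=\lambda_1+\lambda_3$ and $\delta=\lambda_2+\lambda_3$. Then $ab\delta=1$, and $\delta\le 4/\lambda_1^2$ by Lemma \ref{key-2}. The cone condition gives $a,b\in[\lambda_1/2,\,3\lambda_1/2]$, hence $a/b\in[1/3,3]$. From Lemma \ref{Df},
\[
f_1=\delta(2\lambda_1+\delta),\qquad f_2=b(\sigma_1+\lambda_2)=ab+O(\delta\lambda_1),
\]
and similarly for $f_3$. Also $b_{12}=1/a^2$, $b_{13}=1/b^2$ and $b_{23}=1/\delta^2$. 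The right-hand side is therefore $(1+\epsilon)(2+O(\delta/\lambda_1))\,\xi_1^2/(ab)$.

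\textbf{Step 3 (Cauchy--Schwarz).} For $i=2,3$ I apply $\alpha(x+y)^2+\gamma y^2\ge \frac{\alpha\gamma}{\alpha+\gamma}x^2$ with $\alpha=b_{1i}$ and $\gamma=2f_i/(\lambda_1-\lambda_i)\sim ab/\lambda_1$. Since $\alpha\ll\gamma$, this yields
\[
Q\ge\Big(\frac1{a^2}+\frac1{b^2}\Big)\big(1-O(\lambda_1^{-2})\big)\xi_1^2 .
\]
Writing $t=a/b$, we have $\frac1{a^2}+\frac1{b^2}=\frac{t+t^{-1}}{ab}$. So whenever $t+t^{-1}\ge 2(1+\epsilon)+\eta$ for a fixed $\eta>0$, the inequality follows for $\lambda_1$ large. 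With $\epsilon=1/9$, this covers all $t$ with $|\log t|$ bounded below by an explicit constant, roughly $t\notin[0.6,1.6]$.

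\textbf{Step 4 (main obstacle: $a\approx b$).} When $a\approx b$, AM--GM is sharp and the estimate of Step 3 only gives the factor $2$, not $2(1+\epsilon)$. In this regime the cone condition forces $|\lambda_2|,|\lambda_3|\lesssim 0.3\lambda_1$. The extra $\epsilon$ must then come from two sources not yet used: the huge term $b_{23}(\xi_2+\xi_3)^2$, and the constraint $f_2\xi_2+f_3\xi_3=-f_1\xi_1$. The constraint gives $\xi_2+\xi_3\approx -f_1\xi_1/(ab)+O(\text{difference terms})$, since $f_2\approx f_3\approx ab$.

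My first attempt will be to not discard the slack in Step 3. I would minimize $Q$ exactly over $(\xi_2,\xi_3)$ for fixed $\xi_1$, subject to the constraint, via a Lagrange multiplier. This is a $2\times 2$ linear system, so it gives a closed-form lower bound $c(\lambda)\xi_1^2$. I would then check that $c(\lambda)\ge (1+\epsilon)f_1/\lambda_1$ uniformly for $t\in[1/3,3]$ and $\lambda_1$ large. The cone bound $\lambda_{\min}\ge-\lambda_1/2$ enters exactly through the range of $t$ and through the lower bound $\lambda_1-\lambda_i\ge\lambda_1/2$ in $\gamma$. I expect the value $\epsilon=1/9$ to come out of the worst case of this explicit minimization.
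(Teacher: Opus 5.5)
Your Steps 1--3 are correct. Step 1 is a clean identity the paper does not exploit: with $f=1$ and $\sum_p f_p\xi_p=0$ one has $-\sum F^{pp,qq}\xi_p\xi_q=\sum_{i<j}b_{ij}(\xi_i+\xi_j)^2$. However, the proof stops exactly where the lemma has content. For $t=a/b$ in roughly $[0.63,1.6]$, Step 3 only yields the factor $t+t^{-1}\approx 2<2(1+\epsilon)$. This window contains the most natural configuration, $\lambda_2\approx\lambda_3\approx 0$. Step 4 is a programme (``I would minimize\dots I would then check\dots I expect\dots''), not an argument, so as written the lemma is not proved there. Your heuristic for this regime is also unreliable. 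Exactly, $ab(\xi_2+\xi_3)=-f_1\xi_1-\delta(b\xi_2+a\xi_3)$, where $\delta=\lambda_2+\lambda_3$ in your notation. The second term is as large as $f_1\xi_1$ unless you already know $\xi_2,\xi_3\ll\xi_1$. After multiplication by $b_{23}=(ab)^2$, it cannot be treated as a lower-order ``difference term''.

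The missing idea is to divide the constraint by $f$. Since $f_i/f=\sum_{k\ne i}(\lambda_i+\lambda_k)^{-1}$, the constraint reads $X/a+Y/b+Z/\delta=0$, where $X=\xi_1+\xi_2$, $Y=\xi_1+\xi_3$ and $Z=\xi_2+\xi_3$. Hence $b_{23}Z^2=(X/a+Y/b)^2$ \emph{exactly}. With $\gamma_i=2f_i/(\lambda_1-\lambda_i)$ this gives
\[
Q=\frac{X^2}{a^2}+\frac{Y^2}{b^2}+\Big(\frac{X}{a}+\frac{Y}{b}\Big)^2+\gamma_2\xi_2^2+\gamma_3\xi_3^2 ,
\]
which you may now bound over free $(\xi_1,\xi_2,\xi_3)$. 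Write the first three terms as $(X,Y)M(X,Y)^T$, set $e=(1,1)$, and minimize over $(\xi_2,\xi_3)$. This gives
\[
Q\ge\xi_1^2\,e^T(M^{-1}+\Gamma^{-1})^{-1}e\ge(1+\|M\|\,\|\Gamma^{-1}\|)^{-1}\xi_1^2\,e^TMe .
\]
Here $\|M\|=O(\lambda_1^{-2})$, $\|\Gamma^{-1}\|\le 3/\lambda_1$ by the cone condition, and $e^TMe=\frac{2}{ab}(t+t^{-1}+1)\ge\frac{6}{ab}$. The right-hand side is $(1+\epsilon)(2+O(\lambda_1^{-3}))\xi_1^2/(ab)$. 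So the lemma holds for every fixed $\epsilon<2$ once $\lambda_1$ is large, with no case split in $t$. Consequently, your expectation that $\epsilon=1/9$ is the worst case is also off. The value $1/9$ is an artifact of the paper's lossy route: it eliminates $\xi_1$ via the constraint and checks $a_{22}\ge|a_{23}|$ and $a_{33}\ge|a_{32}|$ case by case, discarding positive terms along the way.
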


\begin{proof}
Let $\delta = 1 + \epsilon$.
A direct calculation shows that
\begin{equation}
\begin{aligned}
Q
:=&\; -\sum F^{pp,qq}\xi_{p}\xi_{q} + 2\sum_{i> 1}\frac{F^{ii}\xi_{i}^{2}}{\lambda_{1}-\lambda_{i}}-\delta \frac{F^{11}\xi_{1}^{2}}{\lambda _{1} } \\
=&\; - \sum_{p\ne q}F^{pp,qq}\xi_{p}\xi_{q} - \sum_{p}F^{pp,pp}\xi_{p}^{2} + 2\sum_{i> 1}\frac{F^{ii}\xi_{i}^{2}}{\lambda_{1}-\lambda_{i}}-\delta \frac{F^{11}\xi_{1}^{2}}{\lambda_{1} }\\       
=&\; - \sum_{p\ne 1}F^{pp,11}\xi_{p}\xi_{1} - \sum_{q\ne 1}F^{11,qq}\xi_{1}\xi_{q} - F^{11,11}\xi_{1}^{2} - \delta \frac{F^{11}}{\lambda_{1}}\xi_{1}^{2}\\
&\; - \sum_{p\ne q\ p,q\ge 2}F^{pp,qq} \xi_{p} \xi_{q} - \sum_{p\ge 2}F^{pp,pp}\xi_{p}^{2} + 2\sum_{i> 1}\frac{F^{ii}\xi_{i}^{2} }{\lambda_{1}-\lambda_{i} }.
\end{aligned}
\end{equation}
Recall that $\xi = \left(u_{111},u_{221},u_{331}\right)$ and
$F^{11} u_{111}+F^{22}u_{221}+F^{33}u_{331}=0$ at a point where $D^2u$ is diagonal. We see that
\[
\xi_{1}
=-\frac{F^{22}\xi_{2}+F^{33}\xi_{3}}{F^{11}}.
\]
Substituting the above into $Q$, we obtain that
\[
\begin{aligned}  	           
Q =&\;  \sum_{p\ne 1}F^{pp,11}\xi _{p} \sum_{q\ge 2} \frac{F^{qq}\xi _{q}}{F^{11}} + \sum_{q\ne 1}F^{11,qq}\xi_{q} \sum_{p\ge 2} \frac{F^{pp}\xi _{p}}{F^{11}}\\
&\; - F^{11,11} \left( \sum_{p\ge 2} \frac{ F^{pp} \xi_{p}}{F^{11} }  
\right)^2 - \delta \frac{1}{\lambda _{1}F^{11}} \left( \sum_{p\ge 2}F^{pp}\xi _{p} \right)^2 \\
&\;- \sum_{p\ne q; p,q \geq 2} F^{pp,qq}\xi _{p}\xi _{q} - \sum_{p\ge 2} F^{pp,pp}\xi _{p}^{2} + 2\sum_{i> 1}\frac{F^{ii}\xi _{i}^{2}}{\lambda _{1} -\lambda_{i}}.
\end{aligned}
\]
Let us introduce the following notation
\[
Q =: \sum_{p\ne q; p, q \geq 2}a_{pq}\xi _{p}\xi _{q} + \sum_{p \geq 2} a_{pp}\xi _{p}^{2},
\]
where, for $p\ne q$,
\[\begin{aligned}
a_{pq}
=&\;  \frac{F^{pp,11} F^{qq}}{ F^{11} } + \frac{F^{11,qq}F^{pp}}{F^{11}} - F^{11,11}\frac{F^{pp} F^{qq} }{\left ( F^{11}  \right )^2 }\\
&\; - \delta \frac{1}{\lambda _{1}F^{11}}F^{pp} F^{qq} - F^{pp,qq}
\end{aligned}
\]
and
\[\begin{aligned}
a_{pp}
= &\; \frac{F^{pp,11} F^{pp}}{ F^{11} } + \frac{F^{11,pp}F^{pp}}{F^{11}} - F^{11,11} \left(\frac{ F^{pp} }{ F^{11} } \right)^2
\\
&\; - \delta \frac{1}{\lambda _{1}F^{11}}\left ( F^{pp} \right )^2 - F^{pp,pp} + 2\frac{F^{pp}}{\lambda _{1}-\lambda _{p}}.
\end{aligned}\]

We split the proof into two steps.
First, we show that \textbf{Step 1:} $a_{22} \geq a_{23}$, and then we prove that \textbf{Step 2:} $a_{22} \geq - a_{23}$.
By a similar argument, it follows that 
\[
a_{33} \geq a_{32}\; \mbox{and} \; a_{33} \geq - a_{32}.
\]
Therefore, the quadratic form $Q$ is non negative, since 
\[
a_{22}\geq 0, \; a_{33} \geq 0\; \mbox{and}\; \det (a_{pq}) \geq 0,
\]
which completes the proof of the lemma.

\textbf{Step 1:} We show $a_{22} \geq a_{23}$. We have
\[
\begin{aligned}
a_{22}-a_{23}
=&\; \frac{F^{22,11}}{F^{11}}\left ( F^{22} - F^{33}\right ) + \frac{F^{22}}{F^{11} }\left ( F^{11,22} -F^{11,33} \right ) \\
&\; - F^{11,11}\frac{F^{22}\left ( F^{22}-F^{33}\right )}{\left ( F^{11}\right )^2 } + \frac{ \delta  }{\lambda _{1} F^{11}}F^{22}\left ( F^{33}-F^{22}\right )\\
&\; - F^{22,22} + F^{22,33} + 2\frac{F^{22}}{\lambda _{1} -\lambda _{2} }.
\end{aligned}
\]
By $f_{13} = 2\sigma_{1} = f_{12}$, we obtain that
\begin{equation}
	\label{2-3-1}
	\begin{aligned}
	a_{22}-a_{23}
	= &\; \left(F^{22}-F^{33}\right) \left\{ \frac{F^{22,11}}{F^{11}} - \frac{F^{11,11}F^{22}}{\left(F^{11}\right)^2} - \frac{\delta}{\lambda_1 F^{11}}F^{22} \right\}\\
	&\; - F^{22,22} + F^{22,33} + 2\frac{F^{22}}{\lambda_{1} -\lambda_{2}}.
\end{aligned}
\end{equation}
Recall that
\[
\frac{F^{22,11}}{F^{11}}
= \frac{F^{22}}{ F} - \frac{b_{21}}{F^{11}}
\]
and
\[
\frac{F^{11,11}}{(F^{11})^2} = \frac{1}{F} - \frac{b_{11}}{(F^{11})^2}.
\]
It is also straightforward to compute that
\[
F^{22,33} - F^{22,22} = \frac{F^{22} (F^{33} - F^{22})}{F} + b_{22} - b_{23}.
\]
By the above three equations, we obtain from \eqref{2-3-1} that
\begin{equation}
	\label{2-3-2'}
	\begin{aligned}
		a_{22}-a_{23}
		= &\; F^{22}\left(F^{22}-F^{33}\right) \Big\{ \frac{1}{F} - \frac{2}{F} + \frac{b_{11}}{(F^{11})^2} - \frac{\delta}{\lambda_1 F^{11}}  \Big\}\\
		&\; - \frac{b_{21}}{F^{11}} \left(F^{22}-F^{33}\right) 
		 + b_{22} - b_{23} + 2\frac{F^{22}}{\lambda_{1}-\lambda _{2}}.
	\end{aligned}
\end{equation}

We now group the terms on the right-hand side into four categories,
\[
I := b_{22} - b_{23},
\]
\[
II := - \frac{b_{21}}{F^{11}} \left(F^{22}-F^{33}\right), 
\]
\[
III := 2\frac{F^{22}}{\lambda_{1}-\lambda _{2}},
\]
and
\[
IV := F^{22}\left(F^{22}-F^{33}\right) \left\{ \frac{b_{11}}{(F^{11})^2} - \frac{\delta}{\lambda_1 F^{11}} - \frac{1}{F} \right\}.
\]
A direct computation further shows that
\[
I = \frac{f}{(\la_1 + \la_2)^2} = \frac{(\la_1 + \la_3)(\la_2 + \la_3)}{\la_1 + \la_2},
\]
and, by Lemma \ref{Df},
\[\begin{aligned}
II = \frac{(\la_1 + \la_3)(\la_2 + \la_3)( \la_2^2 - \la_3^2)}{(\la_1 + \la_2) (\s_1^2 - \la_1^2)} 
=  \frac{(\la_1 + \la_3)(\la_2 + \la_3)(\la_2 - \la_3)}{(\la_1 + \la_2) (\s_1 + \la_1)}.
\end{aligned}\]
By adding $I$ and $II$, we obtain
\[
I + II 
= \frac{(\la_1 + \la_3)(\la_2 + \la_3)(\s_1 + \la_1 + \la_2 - \la_3)}{(\la_1 + \la_2) (\s_1 + \la_1)}
= 2 \frac{(\la_1 + \la_3)(\la_2 + \la_3)}{ \s_1 + \la_1}.
\]

We now proceed with some calculations. By the definition of $b_{ij}$ and $f$, we obtain that
\[
\begin{aligned}
	fb_{11}
	=&\;\left ( \lambda_{2}  + \lambda_{3}  \right )^2 [ \left ( \lambda_{1}  + \lambda_{3}  \right )^2 + \left ( \lambda_{1}  + \lambda_{2}  \right )^2 ]\\
	=&\;\left ( \lambda_{2}  + \lambda_{3}  \right )^2 [ \left ( \sigma_{1} + \lambda_{1}\right )^2-2 \left ( \lambda_{1} + \lambda_{3}\right) \left(\lambda_{1} + \lambda_{2}\right) ]\\
	=&\;\left (\sigma_{1} - \lambda_{1}\right )^2\left ( \sigma_{1}  +\lambda_{1}\right )^2-2f\left ( \lambda_{2} + \lambda_{3}\right )\\
	=&\;\left (\sigma_{1} - \lambda_{1}\right )^2\left ( \sigma_{1}  +\lambda_{1}\right )^2 - 2f\left ( \sigma_{1} - \lambda_{1}\right ).
\end{aligned}
\]
Hence, we have
\[\begin{aligned}
\frac{b_{11}}{\left ( F^{11}\right)^2} 
= &\; \frac{1}{f} - \frac{2}{(\s_1 - \la_1) (\s_1 + \la_1)^2}. 
\end{aligned}\]
Note that
\[
\frac{\delta}{\lambda _{1}F^{11}}=\frac{\delta}{\lambda _{1}\left ( \sigma _{1}^{2}-\lambda _{1}^{2}\right)} 
= \frac{\delta }{\lambda _{1}\left ( \sigma _{1}  +\lambda _{1}\right )\left ( \sigma _{1}  -\lambda _{1}\right ) }.
\] 
Therefore, it follows that
\begin{equation}
	\label{delta}
\frac{b_{11}}{(F^{11})^2} - \frac{\delta}{\lambda_1 F^{11}} - \frac{1}{F} 
= - \frac{ 2\lambda_{1} + \delta \left(\sigma_{1} + \lambda_{1}\right)  }{\lambda_{1}\left(\sigma_{1} - \lambda_{1}\right)\left(\sigma_{1} + \lambda_{1}\right)^2}.
\end{equation}
Substituting the above into the term $IV$, we obtain
\[
IV = (\s_1^2 -\la_2^2) (\la_2^2 - \la_3^2) \frac{ 2\lambda_{1} + \delta \left(\sigma_{1} + \lambda_{1}\right)  }{\lambda_{1}\left(\sigma_{1} - \lambda_{1}\right)\left(\sigma_{1} + \lambda_{1}\right)^2}.
\]
Thus, we arrive at
\[
I + II + IV = \frac{1}{\s_1 + \la_1} \left\{2 (\la_1 + \la_3)(\la_2 + \la_3) + (\s_1^2 -\la_2^2) (\la_2 - \la_3) \frac{ 2\lambda_{1} + \delta \left(\sigma_{1} + \lambda_{1}\right)  }{\lambda_{1} \left(\sigma_{1} + \lambda_{1}\right)}
\right\}.
\]
Observe that the first term within the curly brackets is positive.
It follows that
\[\begin{aligned}
&\; I + II + III + IV \\
\geq &\; \frac{\s_1^2 -\la_2^2}{\s_1 + \la_1} \left\{ 2\frac{\s_1 + \la_1}{\lambda_{1} - \lambda _{2}} + (\la_2 - \la_3) \frac{ 2\lambda_{1} + \delta \left(\sigma_{1} + \lambda_{1}\right)  }{\lambda_{1} \left(\sigma_{1} + \lambda_{1}\right)}
\right\}.
\end{aligned}\]
The expression inside the curly brackets can be computed as follows:
\[
\begin{aligned}
	&\; 2 \frac{\sigma_{1}+\lambda_{1} }{\lambda_{1}-\lambda_{2}} + (\lambda_{2}-\lambda_{3}) \frac{ 2\lambda_{1} + \delta (\lambda_{1}+\sigma_{1}) }{\lambda_{1} (\sigma_{1} + \lambda_{1} )}\\
	= &\;\frac{\sigma_{1}+\lambda_{1}}{\lambda_{1}-\lambda_{2}} + 2 \frac{ \lambda_{2}-\lambda_{3} }{\sigma_{1}+\lambda_{1}}+\frac{\sigma_{1}+\lambda_{1}}{\lambda_{1}-\lambda_{2}} + \delta \frac{ \lambda_{2} -\lambda_{3} }{\lambda_{1}}\\
	= &\;\frac{V}{\left(\lambda_{1} - \lambda_{2}\right)\left(\lambda_{1} + \sigma_{1}\right)} + \frac{VI}{\left(\lambda_{1} - \lambda_{2}\right)\lambda_{1}},
\end{aligned}
\]
where we define
\[
V := \left(\sigma_{1}+\lambda_{1}\right)^2+2\left(\lambda_{2}-\lambda_{3}\right)\left(\lambda_{1}-\lambda_{2}\right)
\]
and
\[
VI := \left(\sigma_{1}+\lambda_{1}\right)\lambda_{1}+\delta\left(\lambda_{2}-\lambda_{3}\right)\left(\lambda_{1}-\lambda_{2}\right).
\]

We now divide the analysis into three cases.

\textbf{Case 1:} $\la_2, \la_3 \geq 0$.
Computing directly, we find that
\[
\begin{aligned}
	V = &\;4\lambda_{1}^{2}+6\lambda_{1}\lambda_{2}+2\lambda_{1}\lambda_{3}-\lambda_{2}^{2}+4\lambda_{2}\lambda_{3}+\lambda_{3}^{2}\\
	= &\; 4\lambda_{1}^{2}-\lambda_{2}^{2}+\lambda_{3}^{2}+2\lambda_{1}\left(\lambda_{2}+\lambda_{3}\right)+4\lambda_{2}\left(\lambda_{1}+\lambda_{3}\right)> 0\\
\end{aligned}	
\]
since $\lambda_1 \geq \lambda_2$. For $VI$, note that
\[
\begin{aligned}
	VI
	=&\;2\lambda_{1}^{2}+\left(1+\delta\right)\lambda_{1}\lambda_{2}+\left(1-\delta\right)\lambda_{1}\lambda_{3}-\delta\lambda_{2}^{2}+\delta\lambda_{2}\lambda_{3}\\ 
	\geq &\; 2\lambda_{1}^{2} -\left( 1 + \epsilon\right) \lambda_{2}^{2} - \epsilon \lambda_{1} \lambda_{3} > 0 
\end{aligned}
\]
for $\epsilon < 1/2$.

\textbf{Case 2:} $\lambda_{3}< 0$. Observe that
\[
V = \left(\sigma_{1}+\lambda_{1}\right)^2+2\left(\lambda_{2}-\lambda_{3}\right)\left(\lambda_{1}-\lambda_{2}\right) > 0
\]
and
\[
VI= \left(\sigma_{1}+\lambda_{1}\right)\lambda_{1}+\delta\left(\lambda_{2}-\lambda_{3}\right)\left(\lambda_{1}-\lambda_{2}\right) > 0.
\]

\textbf{Case 3:} $\lambda_2 < 0$.  
Note that $\lambda_2 \geq - \frac{1}{2} \lambda_1$ and $\la_3 > - \la_2 > 0$.
We obtain that
\[\begin{aligned}
	V \geq 4\lambda_{1}\lambda_{2}+4\lambda_{2}\lambda_{3}+4\lambda_{1}^{2}
	\geq  2 \lambda_1^2 + 4 \lambda_2 \lambda_3 > 0
\end{aligned}
\]
since $\lambda_1 > \lambda_3$.
On the other hand, for $VI$, we have 
\[\begin{aligned}
	VI \geq &\; 2\lambda_{1}^{2} - \frac{2+\epsilon}{2}\lambda_{1}^{2} - \epsilon\lambda_{1}^{2} - \frac{1+\epsilon}{4}\lambda_{1}^{2} - \frac{1+\epsilon}{2}\lambda_{1}^{2}\\
	=&\; 2\lambda_{1}^{2}-\frac{7+9\epsilon}{4}\lambda_{1}^{2} \geq 0,
\end{aligned}     
\]
if $\epsilon \leq 1/9$.
This completes the proof of Step 1.

\textbf{Step 2:}
We prove $a_{22} \geq - a_{23}$.
It is straightforward to verify that
\begin{equation}
	\label{2+3-1}
\begin{aligned}
	&\; a_{22} + a_{23}\\
	=&\; \frac{F^{22,11}}{F^{11}}\left ( F^{22} + F^{33} \right ) + \frac{F^{22}}{F^{11} }\left ( F^{11,22} + F^{11,33} \right ) \\
	&\; - F^{11,11}\frac{F^{22}\left ( F^{22} + F^{33}\right )}{\left ( F^{11}\right )^2 }
	 - \frac{\delta}{\lambda _{1} F^{11}}F^{22}\left ( F^{22} + F^{33} \right ) \\
	 &\; - F^{22,22} - F^{22,33} + 2\frac{F^{22}}{\lambda _{1} -\lambda _{2} }.
\end{aligned}
\end{equation}
That is
\begin{equation}
	\label{2+3-2}
	\begin{aligned}
		& a_{22} + a_{23}\\
		=&\; 
		F^{22}\left ( F^{22} + F^{33}\right ) \left\{ - \frac{F^{11,11}}{\left ( F^{11}\right )^2 } - \frac{\delta}{\lambda_{1} F^{11}}\right\}  
		- F^{22,22} - F^{22,33}\\
		&\; + \frac{F^{22,11}}{F^{11}} \left(F^{22} + F^{33}\right) + \frac{F^{22}}{F^{11}}\left ( F^{11,22} + F^{11,33} \right ) + 2\frac{F^{22}}{\lambda_{1}  -\lambda_{2}}.  
	\end{aligned}
\end{equation}
By \eqref{fii} and \eqref{fij}, we obtain that
\[
F^{22,22} + F^{22,33} 
= \frac{F^{22}}{F}\left(F^{22} + F^{33}\right)
	- b_{22} - b_{23}.
\]
Similarly, we find that
\[
	 \frac{F^{22}}{F^{11}} (F^{11,22} + F^{11,33})
	=  \frac{F^{22}}{F} \left( F^{22} + F^{33} \right)  - \frac{F^{22}}{F^{11}} (b_{12} + b_{13} ).
\]
Note that
\[
\frac{F^{22,11}}{F^{11}}
= \frac{F^{22}}{ F} - \frac{b_{21}}{F^{11}}.
\]
Based on the above three equations, we obtain 
\begin{equation}
	\label{2+3-3}
\begin{aligned}
	&\; a_{22} + a_{23}\\
= &\; F^{22}\left ( F^{22} + F^{33}\right ) \left\{ - \frac{F^{11,11}}{\left ( F^{11}\right )^2 } - \frac{\delta}{\lambda_{1} F^{11}} - \frac{1}{F}\right\} + b_{22} + b_{33}\\
&\; + 2 \frac{F^{22}}{F}\left(F^{22} + F^{33}\right) - \frac{b_{21}}{F^{11}}\left(F^{33} + F^{22}\right) - \frac{F^{22}}{F^{11}} b_{11} + 2\frac{F^{22}}{\lambda_{1} - \lambda_{2}},
\end{aligned}
\end{equation}
where we used $b_{11} = b_{12} + b_{13}$.
Substituting
$F^{11,11}+b_{11} = (F^{11})^2/F$
into \eqref{2+3-3}, it follows that
\begin{equation}
	\label{2+3-4}
\begin{aligned}
	&\; a_{22} + a_{23}\\
	= &\; F^{22}\left ( F^{22} + F^{33}\right ) \left\{  \frac{b_{11}}{\left ( F^{11}\right )^2 } - \frac{\delta}{\lambda_{1} F^{11}} \right\} + b_{22} + b_{33}\\
	&\; - \frac{b_{21}}{F^{11}}\left(F^{33} + F^{22}\right) 
	- \frac{F^{22}}{F^{11}} (b_{12} + b_{13}) + 2\frac{F^{22}}{\lambda_{1}  -\lambda_{2}}  
\end{aligned}
\end{equation}
since
$b_{11} = b_{12} + b_{13}$.

We now group the terms on the right-hand side into four categories,
\[
I : = b_{22} +  b_{23} - \frac{F^{22}}{F^{11}} (b_{12} + b_{13}),
\]
\[
II :=- \frac{b_{21}}{F^{11}}\left( F^{22} + F^{33}\right),
\]
\[
III : = 2 \frac{F^{22}}{\lambda_{1}  -\lambda_{2}},
\]
and
\[
IV : = F^{22} \left (F^{22} + F^{33} \right ) \left\{ \frac{b_{11}}{\left( F^{11}\right)^2} - \frac{\delta}{\lambda_{1}F^{11}} \right\}.
\]
We see that
\begin{equation}
	\label{2+3-5}
		 a_{22} + a_{23} > I + II + IV,
\end{equation}
since $III > 0$.
A routine computation yields
\[\begin{aligned}
b_{22} + b_{23} 
= \frac{(\lambda_1 + \lambda_3) (\sigma_1 - \lambda_1)}{\lambda_1 + \lambda_2} + 2 \frac{(\lambda_1 + \lambda_2) (\lambda_1 + \lambda_3)}{\lambda_2 + \lambda_3}
\end{aligned}\]
and
\[
\frac{F^{22}}{F^{11}} (b_{12} + b_{13})
= \frac{(\sigma_1^2 -\lambda_2^2) (\lambda_1 + \lambda_3) }{(\sigma_1 + \lambda_1)(\lambda_1 + \lambda_2)} + \frac{(\sigma_1 + \lambda_2) (\lambda_1 + \lambda_2)}{\sigma_1 + \lambda_1}.
\]
Combining the two equations above, we obtain
\[
\begin{aligned}
	I 
	= \frac{(\lambda_2^2 - \lambda_1^2) (\lambda_1 + \lambda_3) }{(\sigma_1 + \lambda_1)(\lambda_1 + \lambda_2)} + 2 \frac{(\lambda_1 + \lambda_2) (\lambda_1 + \lambda_3)}{\lambda_2 + \lambda_3} - \frac{(\sigma_1 + \lambda_2) (\lambda_1 + \lambda_2)}{\sigma_1 + \lambda_1 }.
\end{aligned}
\]
For $II$, it follows by direct computation that
\[\begin{aligned}
II = &\; - \frac{(\lambda_1 + \lambda_3) (\lambda_2 + \lambda_3)}{(\lambda_1 + \lambda_2) (\sigma_1^2 - \lambda_1^2)} \left(\sigma_1^2 - \lambda_2^2 + \sigma_1^2 -\lambda_3^2\right)\\
= &\;  - \frac{\lambda_1 + \lambda_3}{(\lambda_1 + \lambda_2) (\sigma_1 + \lambda_1)} \left(\sigma_1^2 - \lambda_2^2 + \sigma_1^2 -\lambda_3^2\right)\\
= &\; - \frac{(\sigma_1^2 -\lambda_2^2)(\lambda_1 + \lambda_3)}{(\lambda_1 + \lambda_2) (\sigma_1 + \lambda_1)} - \frac{(\sigma_1 + \lambda_3) (\lambda_1 + \lambda_3)}{ \sigma_1 + \lambda_1}.
\end{aligned}\]
Combining $I$ and $II$, we find that
\[
\begin{aligned}
	I + II = &\; \frac{( \lambda_2^2 - \lambda_1^2 ) (\lambda_1 + \lambda_3) }{(\sigma_1 + \lambda_1)(\lambda_1 + \lambda_2)} 
	- \frac{(\sigma_1^2 - \la_2^2) (\lambda_1 + \lambda_3) }{(\sigma_1 + \lambda_1)(\lambda_1 + \lambda_2)}\\
	&\; + 2 \frac{(\lambda_1 + \lambda_2) (\lambda_1 + \lambda_3)}{\lambda_2 + \lambda_3} - \frac{(\sigma_1 + \lambda_2) (\lambda_1 + \lambda_2)}{\sigma_1 + \lambda_1} - \frac{(\sigma_1 + \lambda_3) (\lambda_1 + \lambda_3)}{ \sigma_1 + \lambda_1}.
\end{aligned}
\]
Note that
\[\begin{aligned}
\frac{\lambda_1 + \lambda_3}{\lambda_2 + \lambda_3} - \frac{\sigma_1 + \lambda_2}{\sigma_1 + \lambda_1}
 = \frac{2 \sigma_1 (\lambda_1 -\lambda_2)}{\sigma_1^2 - \lambda_1^2},
\end{aligned}\]
and, similarly,
\[
 \frac{\lambda_1 + \lambda_2}{\lambda_2 + \lambda_3} - \frac{\sigma_1 + \lambda_3}{\sigma_1 + \lambda_1}
 = \frac{2 \sigma_1 (\lambda_1 -\lambda_3)}{\sigma_1^2 - \lambda_1^2}.
\]
Consequently, we find
\[
\begin{aligned}
	I + II
	= &\; \frac{( \lambda_2 - \lambda_1 ) (\lambda_1 + \lambda_3) (\la_2 + \la_3) }{\sigma_1^2 - \lambda_1^2} 
	- \frac{(\sigma_1^2 - \lambda_2^2) (\s_1 - \la_2) }{(\sigma_1 + \lambda_1)(\lambda_1 + \lambda_2)} \\
	&\; + \frac{2 \sigma_1 (\lambda_1 + \lambda_2) (\lambda_1 -\lambda_2)}{\sigma_1^2 - \lambda_1^2} + \frac{2 \sigma_1 (\lambda_1 + \lambda_3) (\lambda_1 -\lambda_3)}{\sigma_1^2 - \lambda_1^2}.
\end{aligned}
\]
By \eqref{delta}, we obtain that
\[\begin{aligned}
IV 
= &\; \frac{(\sigma_1^2 - \lambda_2^2)(\sigma_1 + \lambda_2)}{(\lambda_1 + \lambda_2) (\lambda_2 + \lambda_3)} + \frac{(\sigma_1 + \lambda_2) (\sigma_1 + \lambda_3)}{\lambda_2 + \lambda_3} \\
 & \; - \left[ (\sigma_1^2 - \lambda_2^2)^2 + (\sigma_1^2 - \lambda_2^2) (\sigma_1^2 - \lambda_3^2) \right] \frac{2 \lambda_1 + \delta (\sigma_1 + \la_1) }{\lambda_1 (\sigma_1 -\lambda_1) (\sigma_1 + \lambda_1)^2}.
\end{aligned}\]
That is,
\[
\begin{aligned}
	\uppercase\expandafter{\romannumeral4} 
	= &\; \frac{(\sigma_1^2 - \lambda_2^2)(\sigma_1 + \lambda_2) (\sigma_1 + \lambda_1)}{(\lambda_1 + \lambda_2) (\sigma_1^2 - \lambda_1^2)} + \frac{(\sigma_1 + \lambda_2) (\sigma_1 + \lambda_3) (\s_1 + \la_1)}{\s_1^2 - \la_1^2} \\
	& \; - \left[ (\sigma_1^2 - \lambda_2^2)^2 + (\sigma_1^2 - \lambda_2^2) (\sigma_1^2 - \lambda_3^2) \right] \frac{2 \lambda_1 + \delta ( \sigma_1 + \la_1) }{\lambda_1 (\sigma_1^2 -\lambda_1^2) (\sigma_1 + \lambda_1)}.
\end{aligned}
\]
Combining $I$, $II$, and $IV$, we derive
\[
\begin{aligned}
	&\; I + II + IV\\
	= &\; \frac{1}{\s_1^2 - \la_1^2} \Big\{ (\la_2 -\la_1) (\la_1 + \la_3) (\la_2 + \la_3) + 2 \s_1 (\la_1^2 -\la_2^2) + 2 \s_1 (\la_1^2 - \la_3^2) \\
	&\; - \frac{(\s_1^2 - \la_2^2) (\s_1 - \la_2) (\s_1 - \la_1)}{\la_1 + \la_2} + \frac{(\s_1^2 - \la_2^2) (\s_1 + \la_2) (\s_1 + \la_1)}{\la_1 + \la_2}\\
	&\; - \frac{2 \la_1 + \delta (\s_1 + \la_1 )}{\la_1 (\s_1 + \la_1)}
	\left[(\s_1^2 - \la_2^2)^2 + (\s_1^2 - \la_2^2) (\s_1^2 - \la_3^2) \right] \\
	&\; + (\s_1 + \la_2) (\s_1 + \la_3) (\s_1 + \la_1) \Big\}.
\end{aligned}
\]
For the last term in the above equation, a direct computation yields
\[
\begin{aligned}
&\; (\s_1 + \la_2) (\s_1 + \la_3) (\s_1 + \la_1)\\
= &\; (\la_1 + \la_2 + \la_2 + \la_3) (\s_1 + \la_3) (\s_1 + \la_1)\\
\geq &\; (\la_1 + \la_2) (\la_1 + \la_3) (\s_1 + \la_1) + (\la_2 + \la_3) (\s_1 + \la_3) (\la_1 + \la_3).
\end{aligned}
\]
Note that
\[
(\la_1 + \la_3) (\la_2 + \la_3) (\s_1 + \la_3) + (\la_2 -\la_1) (\la_1 + \la_3) (\la_2 + \la_3) > 0.
\]
It is also easy to see that
\[
\frac{(\s_1^2 - \la_2^2) (\s_1 + \la_2) (\s_1 + \la_1)}{\la_1 + \la_2}
- \frac{(\s_1^2 - \la_2^2) (\s_1 - \la_2) (\s_1 - \la_1)}{\la_1 + \la_2} 
= 2 \s_1 (\s_1^2 - \la_2^2).
\]
Hence, we have
\[
\begin{aligned}
	&\; I + II + IV\\
	\geq &\; \frac{1}{\s_1^2 - \la_1^2} \left\{ (\la_1 + \la_2) (\s_1 - \la_2) (\s_1 + \la_1) \right.\\
	&\; \left. + 2 \s_1 (\s_1 -\la_3) (\la_1 - \la_2) + 2 \s_1 (\s_1 - \la_2) (\la_1 - \la_3) \right.\\
	&\; \left. + (\s_1^2 - \la_2^2) \Big[ 2 \s_1 - \Big(\frac{2}{ \s_1 + \la_1} + \frac{\delta}{\la_1} \Big) (\s_1^2 - \la_2^2 + \s_1^2 - \la_3^2) \Big] \right\}.
\end{aligned}
\]
Note that
\[
\begin{aligned}
	&\; 2 \s_1 - \frac{2}{ \s_1 + \la_1} (\s_1^2 - \la_2^2 + \s_1^2 - \la_3^2)\\
	= &\; \frac{1}{\s_1 + \la_1} [ 2 \s_1 (\s_1 + \la_1) - 2 (2\s_1^2 - \la_2^2 - \la_3^2) ]\\
	= &\; \frac{1}{\s_1 + \la_1} [ 2 \s_1 (\la_1 - \s_1) + 2\la_2^2 + 2 \la_3^2 ].
\end{aligned}
\]
This shows that
\[
\begin{aligned}
	&\; I + II + IV\\
	\geq &\; \frac{1}{\s_1^2 - \la_1^2} \Big\{ (\la_1 + \la_2) (\s_1 - \la_2) (\s_1 + \la_1) +
	2 \s_1 (\s_1 -\la_3) (\la_1 - \la_2)\\
	&\; + 2 \s_1 (\s_1 - \la_2) (\la_1 - \la_3)
	 + \frac{\s_1^2 - \la_2^2}{\s_1 + \la_1} (- 2 \s_1 (\la_2 + \la_3) + 2 \la_2^2 + 2 \la_3^2)\\
	 &\; - \frac{\delta}{\la_1} (\s_1^2 - \la_2^2) (\s_1^2 - \la_2^2 + \s_1^2 - \la_3^2) \Big\}.
\end{aligned}
\]
For the first term inside the curly brackets, we compute that
\[
 (\s_1 - \la_2) (\s_1 + \la_1) =  \s_1^2 - \la_2^2 + (\s_1 - \la_2) (\la_1 - \la_2).
\]
We arrive at
\[
\begin{aligned}
	&\; I + II + IV\\
	\geq &\; \frac{1}{\s_1^2 - \la_1^2} \left\{ 
	 \frac{\s_1^2 - \la_2^2}{\s_1 + \la_1} \Big[ (\la_1 + \la_2) (\s_1 + \la_1) - 2 \s_1 (\la_2 + \la_3) + 2 \la_2^2 + 2 \la_3^2\Big] \right.\\
	 &\;\left. + (\la_1 + \la_2) (\s_1 - \la_2 )(\la_1 - \la_2) + 2 \s_1 (\s_1 -\la_3) (\la_1 - \la_2) \right.\\
	 &\; \left. + 2 \s_1 (\s_1 - \la_2) (\la_1 - \la_3)
	- \frac{\delta}{\la_1} (\s_1^2 - \la_2^2) (\s_1^2 - \la_2^2 + \s_1^2 - \la_3^2) \right\}.
\end{aligned}
\]

Define
\[
V := (\la_1 + \la_2) (\s_1 + \la_1) - 2 \s_1 (\la_2 + \la_3) + 2 \la_2^2 + 2 \la_3^2
\]
and
\[
\begin{aligned}
	VI : = &\; (\la_1 + \la_2) (\s_1 - \la_2 )(\la_1 - \la_2) + 2 \s_1 (\s_1 -\la_3) (\la_1 - \la_2) \\
	&\; + 2 \s_1 (\s_1 - \la_2) (\la_1 - \la_3) - \frac{\delta}{\la_1} (\s_1^2 - \la_2^2)  (\s_1^2 - \la_2^2 + \s_1^2 - \la_3^2).
\end{aligned}
\]
Therefore,
\[
I + II + IV \geq \frac{1}{\s_1^2 - \la_1^2} \left\{ 
\frac{\s_1^2 - \la_2^2}{\s_1 + \la_1} V + VI \right\}.
\]
We have
\[
\begin{aligned}
	V 
	= &\;  (\la_1 + \la_2) (2\la_1 + \la_2 + \la_3) - 2 \la_1 (\la_2 + \la_3) - 4 \la_2\la_3\\
	= &\; 2 \la_1 (\la_1 - \la_3) + (\la_1 + \la_2) (\la_2 + \la_3) - 4\la_2 \la_3\\
	= &\; 2 \la_1^2 - \la_1 \la_3 + \la_1 \la_2 + \la_2^2 - 3 \la_2 \la_3
\end{aligned}
\]
and
\[
\begin{aligned}
VI \geq &\; \frac{\s_1 - \la_2}{\la_1} \Big(\la_1 (\la_1^2 - \la_2^2) + \frac{2 \s_1 (\s_1 - \la_3) (\la_1 - \la_2) \la_1}{\s_1 - \la_2} \\
&\; + 2 \s_1 \la_1 (\la_1 - \la_3) - 2\delta \s_1^2 (\la_1 + \la_2)
- 2 \delta \s_1^2 (\la_2 + \la_3) \Big),
\end{aligned}
\]
where we used $\s_1 + \la_2 = \la_1 + \la_2 + \la_2 + \la_3$ 
and $\la_2^2 + \la_3^2 \geq 0$.
Let
\[
\tilde \la_i = \frac{\la_i}{\la_1},\; 1 \leq i \leq 3.
\]
Thus, 
it follows that
\[
\begin{aligned}
	\frac{VI}{\la_1^3} \geq &\; \frac{\s_1 - \la_2}{\la_1} \Big( 1 - \tilde \la_2^2 + \frac{2}{1 + \tilde \la_3} (1 + \tilde \la_2 + \tilde \la_3) (1 - \tilde \la_2^2) \\
	&\; + 2 (1 + \tilde \la_2 + \tilde \la_3) (1 - \tilde \la_3)
	- 2 \delta (1 + \tilde \la_2) (1 + \tilde \la_2 + \tilde \la_3)^2\\
	&\; - 2 \delta (\tilde \la_2 + \tilde \la_3) (1 + \tilde \la_2 + \tilde \la_3)^2 \Big).
\end{aligned}
\]
By Lemma \ref{key-2}, we see that
\[
2 \delta (\tilde \la_2 + \tilde \la_3) (1 + \tilde \la_2 + \tilde \la_3)^2
=  O\Big(\frac{\delta}{\la_1^3}\Big). 
\]
Let
\[\begin{aligned}
	VII : =&\; (1 - \tilde\la_2^2) \Big[1 + \frac{2}{1 + \tilde \la_3} (1 + \tilde \la_2 + \tilde \la_3)\Big] + O\Big(\frac{\delta}{\la_1^3}\Big) \\
	&\; + 2 (1 + \tilde \la_2 + \tilde \la_3) (1 - \tilde \la_3)
	- 2 \delta (1 + \tilde \la_2) (1 + \tilde \la_2 + \tilde \la_3)^2.
\end{aligned}\]
We then arrive at
\[
I + II + IV \geq \frac{1}{\s_1^2 - \la_1^2} \left\{ 
\frac{\s_1^2 - \la_2^2}{\s_1 + \la_1} V + \la_1^3 \frac{\s_1 - \la_2}{\la_1} VII \right\}.
\]

We now divide the analysis into three cases.

\textbf{Case 1:} $\la_2, \la_3 \geq 0$.
This means $0 < \tilde \la_2, \tilde \la_3 \leq  O(\frac{1}{\la_1^3})$.
We can see that
\[
V = (\la_1 + \la_2 )(\la_1 - \la_3) + \la_1^2 + \la_2^2 - 2 \la_2 \la_3 \geq (\la_2 - \la_3)^2 \geq 0.
\]
Moreover, it is straightforward to verify that
\[
VII = 3 + 2 - 2 \delta + O\Big(\frac{1}{\la_1^3}\Big) > 0
\]
for $0 < \epsilon < 1/4$ and sufficiently large $\la_1$.

\textbf{Case 2:} $\la_2 < 0$. Then $- \frac{1}{2} \leq \tilde \la_2 < 0$ and $0 < \tilde \la_3 < \frac{1}{2} + O(\frac{1}{\la_1^3})$.
It is easy to see that
\[
V \geq 2 \la_1^2 - \la_1 \la_3 + \la_1 \la_2 \geq \la_1 (\la_1 - \la_3) \geq 0.
\]
And for $VII$, we find that
\[
VII \geq \Big(1 - \frac{1}{4}\Big) \Big(1 + \frac{4}{3} \Big) + 1 - 2 \delta + O\Big(\frac{1}{\la_1^3}\Big) > 0
\]
for $0 < \epsilon < 1/4$ and sufficiently large $\la_1$.

\textbf{Case 3:} $\la_3 < 0$. Then $- \frac{1}{2} \leq \tilde \la_3 < 0$ and $0 < \tilde \la_2 < \frac{1}{2} + O(\frac{1}{\la_1^3})$.
Hence, we have 
\[
V = 2 \la_1^2 - \la_1 \la_3 + \la_1 \la_2 + \la_2^2 - 3 \la_2 \la_3 > 0.
\]
A direct computation yields that
\[
VII \geq 3 \Big(1 - \frac{1}{4}\Big) + 2 - 2 \delta \Big(1 + \frac{1}{2}\Big)+ O\Big(\frac{1}{\la_1^3}\Big) > 0
\]
for $0 < \epsilon < 1/4$ and sufficiently large $\la_1$.
This completes the proof of Step 2, and hence the proof of Lemma \ref{key}.
\end{proof}

\section{Proof of the main theorem}

Let $\Omega \subset \mathbb{R}^3$ be a domain.
Denote by
$M_2 (D^2 u)$ the quantity $m_2 (\lambda (D^2u))$.
Now consider the following Dirichlet problem
\begin{equation}
	\label{Dirichlet}
	\left\{\begin{aligned}
		M_2 (D^2 u) = &\; 1, &\; \mbox{in} \; \Omega,\\
		u = &\; 0, &\; \mbox{on} \; \partial \Omega.
	\end{aligned}
	\right.
\end{equation}

\begin{lemma}
	\label{Pogrelov}
	Suppose that $u : \Omega \rightarrow \mathbb{R}$ with $\lambda( D^2 u) \in P_2^{1/2}$ is a smooth solution to the Dirichlet problem \eqref{Dirichlet}. Then, we have
	\begin{equation}
		\label{pogrelov}
		(-u)^\beta \Delta u \leq C,
	\end{equation}
	where $C$ and $\beta$ depend only on $||u||_{C^0}$ and the diameter of the domain $\Omega$.
\end{lemma}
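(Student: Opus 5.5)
We will follow the Li--Ren--Wang approach to a Pogorelov estimate, with Lemma \ref{key} supplying the concavity that their $\Gamma_{k+1}$ assumption provides. Since $\lambda(D^2u)\in P_2^{1/2}$ forces $\sigma_1>0$, $u$ is subharmonic, so $u<0$ in $\Omega$. Also $|\lambda_i|\le \lambda_1$ for all $i$, so $\Delta u\le 3\lambda_1$ and it suffices to bound $(-u)^\beta\lambda_1$.

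We consider the test function
\[
\phi=\beta\log(-u)+\log\lambda_{\max}+\frac{a}{2}|Du|^2+\frac{A}{2}|x|^2
\]
on $\{u<0\}$, with constants $\beta,a,A$ to be chosen. At an interior maximum point $x_0$ we rotate coordinates so that $D^2u$ is diagonal with $\lambda_1=u_{11}$. If $\lambda_1$ has multiplicity, we replace $\lambda_{\max}$ by a smooth perturbation in the standard way. We may assume $\lambda_1$ is large, so Lemma \ref{key} applies.

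\textbf{Derivatives at $x_0$.} The first-order condition gives
\[
\beta\frac{u_i}{u}+\frac{u_{11i}}{\lambda_1}+a u_iu_{ii}+Ax_i=0.
\]
For the second-order condition, we differentiate the equation twice, which gives $F^{ii}u_{ii11}=-F^{pq,rs}u_{pq1}u_{rs1}$. We use the standard second derivative of $\lambda_1$,
\[
(\lambda_1)_{ii}\ge u_{11ii}+2\sum_{p>1}\frac{u_{1pi}^2}{\lambda_1-\lambda_p}.
\]
Together with $F^{ij}u_{ij}=3F$ (homogeneity of degree $3$), this yields
\[
0\ge \frac{1}{\lambda_1}\Big(-F^{pq,rs}u_{pq1}u_{rs1}+2\sum_{i}\sum_{p>1}\frac{F^{ii}u_{1pi}^2}{\lambda_1-\lambda_p}\Big)-\sum_i\frac{F^{ii}u_{11i}^2}{\lambda_1^2}+\frac{3\beta}{u}-\beta\sum_i\frac{F^{ii}u_i^2}{u^2}+a\sum_iF^{ii}\lambda_i^2+A\sum_iF^{ii}.
\]
The off-diagonal part of $-F^{pq,rs}$ contributes the nonnegative terms $-\frac{f_p-f_q}{\lambda_p-\lambda_q}u_{pq1}^2$, since $f$ is concave-type in this direction in $P_2$. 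We keep these terms.

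\textbf{Absorbing the third-derivative terms.} For $i=1$, Lemma \ref{key}, applied with $\xi=(u_{111},u_{221},u_{331})$ and using $u_{ii1}=u_{1ii}$, shows that the diagonal terms of the bracket dominate $(1+\epsilon)F^{11}u_{111}^2/\lambda_1$. This leaves $\epsilon F^{11}u_{111}^2/\lambda_1^2$ to absorb the bad term $-\beta F^{11}u_1^2/u^2$ via the first-order condition, once $\beta$ is large relative to $1/\epsilon$.

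For $i\ge2$, the bad term $F^{ii}u_{11i}^2/\lambda_1^2$ is controlled by two good terms: $2F^{ii}u_{11i}^2/(\lambda_1(\lambda_1-\lambda_i))$, and the off-diagonal term $-F^{11,ii}$ type, which in the $m_2$ setting equals $\frac{f_1-f_i}{\lambda_1-\lambda_i}$ times $u_{11i}^2$. Here $f_1-f_i=\lambda_i^2-\lambda_1^2$, so that factor is $\lambda_1+\lambda_i$, giving the good term $(\lambda_1+\lambda_i)u_{1i1}^2/\lambda_1$. Combined with $\lambda_i\ge-\lambda_1/2$, this gives $\lambda_1-\lambda_i\le\frac32\lambda_1$ and enough room to absorb $\beta F^{ii}u_i^2/u^2$, in the usual Li--Ren--Wang fashion. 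If that fails, we split into the cases $\beta F^{ii}u_i^2/u^2$ small versus large.

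\textbf{Conclusion.} After absorption we are left with
\[
0\ge a\sum F^{ii}\lambda_i^2+A\sum F^{ii}+\frac{3\beta}{u}-C.
\]
Since $F^{11}=\sigma_1^2-\lambda_1^2\ge c\lambda_1^2$ (because $\lambda_2+\lambda_3>0$ gives $\sigma_1>\lambda_1$), this forces $(-u)\lambda_1\le C$ at $x_0$, up to the powers of $\beta$. Hence $\phi$ is bounded at $x_0$, which yields \eqref{pogrelov} with constants depending on $\|u\|_{C^0}$, $\operatorname{diam}\Omega$, and $\sup|Du|$. The gradient bound follows from convexity-type arguments using subharmonicity and $u=0$ on $\partial\Omega$, and we expect it to be handled via $\|u\|_{C^0}$ and the diameter.

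We expect the main obstacle to be the absorption of the $i\ge2$ terms $F^{ii}u_{11i}^2/\lambda_1^2$ when $\lambda_i$ is close to $\lambda_1$ or negative. This is precisely where the cone restriction $\lambda_{\min}\ge-\frac12\lambda_{\max}$ must be used.
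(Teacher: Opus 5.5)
Your proposal has a genuine gap: the term $\frac a2|Du|^2$ in your test function makes the constants depend on $\sup_\Omega|Du|$. The first-order condition now contains $a u_i\lambda_i$, so absorbing $\beta F^{ii}u_i^2/u^2$ produces $\frac{a^2}{\beta}F^{ii}u_i^2\lambda_i^2$, which $a\sum F^{ii}\lambda_i^2$ dominates only if $\beta\gtrsim a\sup|Du|^2$. The statement allows dependence only on $\|u\|_{C^0}$ and $\operatorname{diam}\Omega$, and your remedy does not work. The function $u$ is not convex, and subharmonicity gives no gradient control: $-\delta(|x-x_0|^2+\delta^2)^{-1/2}$ is subharmonic on $\mathbb{R}^3$ with sup norm $1$ and gradient of order $1/\delta$. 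Moreover, a global bound on $|Du|$ for the Dirichlet problem would depend on the geometry of $\partial\Omega$. That geometry is uncontrolled in the application, where $\Omega$ is the rescaled sublevel set $\Omega_R$ of a non-convex function. Independence from $Du$ is exactly what the Liouville argument needs; the introduction notes that Dinew's $Du$-dependent estimate forces an extra upper growth assumption.

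The missing idea is that for $m_2$ on $\mathbb{R}^3$ no gradient term is needed, because $\sum F^{ii}$ itself grows quadratically. With $f=1$, $\sum F^{ii}=2\sum_{i<j}(\lambda_i+\lambda_j)^{-1}\ge 2/(\lambda_2+\lambda_3)\ge\lambda_1^2/2$ by Lemma \ref{key-2}. So take $\log\lambda_{\max}+\beta\log(-u)+\frac12|x|^2$, as the paper does. The first-order condition gives $\beta F^{ii}u_i^2/u^2\le\frac2\beta F^{ii}\big(u_{11i}^2/\lambda_1^2+x_i^2\big)$. For $\beta\ge 18$, the third-order part is absorbed by the surplus $\epsilon=\frac19$ of Lemma \ref{key} when $i=1$, and by a surplus $\frac13$ when $i\ge2$. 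The $x_i^2$ part is absorbed by $\sum F^{ii}$ once $\beta$ is large relative to $(\operatorname{diam}\Omega)^2$. This leaves $0\ge 3\beta/u+\frac12\sum F^{ii}$, i.e. $(-u)\lambda_1^2\le 12\beta$ at the maximum point, which bounds $(-u)^\beta\lambda_1$.

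Relatedly, your claim $F^{11}\ge c\lambda_1^2$ is false. In fact $F^{11}=(\lambda_1+\lambda_2)^{-1}+(\lambda_1+\lambda_3)^{-1}\le 4/\lambda_1$, since $\lambda_j\ge-\lambda_1/2$. The large coefficients are $F^{22},F^{33}\ge(\lambda_2+\lambda_3)^{-1}\ge\lambda_1^2/4$. The same slip appears in your $i\ge2$ step. There the term coming from $\partial^2\lambda_1$ is $2F^{11}u_{11i}^2/(\lambda_1(\lambda_1-\lambda_i))$, not one with $F^{ii}$, and it is negligible. Only its sum with the off-diagonal term $2(\lambda_1+\lambda_i)u_{11i}^2/\lambda_1$ equals $2F^{ii}u_{11i}^2/(\lambda_1(\lambda_1-\lambda_i))\ge\frac43F^{ii}u_{11i}^2/\lambda_1^2$, and that is where the $\frac13$ surplus comes from.
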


\begin{proof}
	Consider the following test function
	\[
	P = \log \lambda_{max} + \beta \log (-u) + \frac{1}{2} |x|^2.
	\]
	Suppose that $P$ attains its maximum at the point $x_0 \in \Omega$.
	We rotate the coordinates such that $u_{ij} = \lambda_i \delta_{ij}$ and 
	\[
	\lambda_1 = \max_{1\leq i \leq 3}\{\la_i\}.
	\] 
	Recall the formulas for differentiating the eigenvalues from \cite{Spruck}. For all $ 1 \leq i, j \leq 3$,
	\[
	\frac{\partial \la_1}{\partial u_{ij}} = \delta_{1i}\delta_{ij}
	\]
	and
	\[
	\frac{\partial^2 \la_1}{\partial u_{1j}^2 } = \frac{1}{\la_1 - \la_j},\;
	\frac{\partial^2 \la_1}{\partial u_{j1}^2 } = - \frac{1}{\la_j - \la_1},\; \mbox{if}\; \la_1 > \la_j.
	\]
	Differentiating $P$ at $x_0$, we obtain
	\begin{equation}
		\label{diff-1}
		0 = \frac{u_{11i}}{\la_1} + \frac{\beta u_i}{u} + x_i
	\end{equation}
	and
	\begin{equation}
		\label{diff-2}
		0 \geq \frac{u_{11ii}}{\la_1} + \sum_{\ell > 1} \frac{2 u_{1\ell i}^2}{(\la_1 - \la_\ell)\la_1} - \frac{u_{11i}^2}{\la_1^2} + \frac{\beta u_{ii}}{u} - \frac{\beta u_i^2}{u^2} + 1.
	\end{equation}
	Since
	\[
	M_2^{ii} u_{ii} = 3 M_2 (D^2 u) = 3
	\]
	and
	\[
	M_2^{ii} u_{11ii} = - M_2^{pq,rs} u_{pq 1} u_{rs 1},
	\]
	contracting \eqref{diff-2} with $M_2^{ii}$ and by the above two equations, we obtain 
	\begin{equation}
		\label{P1}
		\begin{aligned}
			0 \geq &\; - \frac{ M_2^{pq,rs} u_{pq 1} u_{rs 1}}{\lambda_1} + \sum_{\ell > 1} \frac{2 M_2^{ii} u_{1\ell i}^2}{(\lambda_1 - \lambda_\ell)\lambda_1}\\
			&\; - \frac{M_2^{ii} u_{11i}^2}{\lambda_1^2} + \frac{3 \beta }{u} - \frac{\beta M_2^{ii} u_i^2}{u^2} + \sum M_2^{ii},
		\end{aligned}
	\end{equation}
	
	By the critical equation \eqref{diff-1}, we have
	\begin{equation}
		\label{u_i^2}
	- \frac{\beta M_2^{ii} u_i^2}{u^2} \geq - \frac{2}{\beta} \frac{M_2^{ii} u_{11i}^2}{\lambda_1^2} - \frac{2}{\beta} M_2^{ii} x_i^2.
	\end{equation}
	On the other hand,
	note that
	\begin{equation}\label{M2nd}
	\sum_{p,q,r,s} M_2^{pq,rs} u_{pq1} u_{rs1} = \sum_{p,q} M_2^{pp, qq} u_{pp1} u_{qq1} + \sum_{p\neq q} M_2^{pq,qp} u_{pq1}^2
	\end{equation}
	and
	\[
	M_2^{pq,qp} = \frac{M_2^{pp} - M_2^{qq}}{\la_p - \la_q} \leq 0.
	\]
	We obtain that
	\begin{equation}\label{3rd-2}
	\begin{aligned}
		& - \sum_{p \neq q} M_2^{pq,qp} u_{pq 1}^2 + \sum_{\ell > 1} \frac{2 M_2^{11} u_{1\ell 1}^2}{\lambda_1 - \lambda_\ell}\\
		\geq &\; 2 \sum_{\ell > 1} \frac{M_2^{\ell \ell} - M_2^{11} }{\lambda_1 - \lambda_\ell} u_{11 \ell}^2 + \sum_{\ell > 1} \frac{2 M_2^{11} }{\lambda_1 - \lambda_\ell} u_{11 \ell}^2\\
		\geq &\; \sum_{\ell > 1} \frac{4}{3} \frac{M_2^{\ell \ell} u_{11 \ell}^2}{\lambda_1} \geq \Big(1 + \frac{2}{\beta}\Big) \sum_{\ell > 1}  \frac{M_2^{\ell \ell} u_{11 \ell}^2 }{\lambda_1} 
	\end{aligned}
	\end{equation}
	for $\beta \geq 6$.

	Substituting \eqref{u_i^2}, \eqref{M2nd} and \eqref{3rd-2} into \eqref{P1}, we obtain
	\begin{equation}
		\label{P2}
		\begin{aligned}
			0 \geq &\; - \frac{ M_2^{pp,qq} u_{pp 1} u_{qq 1}}{\lambda_1} + \sum_{i > 1} \frac{2 M_2^{ii} u_{i i 1}^2}{(\lambda_1 - \lambda_i)\lambda_1}\\
			&\; - \Big(1 + \frac{2}{\beta}\Big) \frac{M_2^{11} u_{111}^2}{\lambda_1^2} + \frac{3\beta}{u} + \Big( 1 - \frac{C}{\beta} \Big) \sum M_2^{ii},
		\end{aligned}
	\end{equation}
	where $C$ depends on the diameter of $\Omega$.
	By the key Lemma \ref{key}, for $\beta \geq 18$, we arrive at
	\begin{equation}
		\label{P3}
		0 \geq \frac{3\beta}{u} 
		+ \Big( 1 - \frac{C}{\beta} \Big) \sum M_2^{ii}.
	\end{equation}
Since
\[
\sum M_2^{ii} = \frac{2}{\la_1 + \la_2} + \frac{2}{\la_1 + \la_3} + \frac{2}{\la_2 + \la_3} \geq \frac{2}{\la_2 + \la_3} \geq \frac{\la_1^2}{2}
\]
by Lemma \ref{key-2}, we finally obtain
	\begin{equation}
		\label{P4}
	 (-u)\lambda_1^{2} \leq 12\beta
	\end{equation}
	for $\beta$ sufficiently large depending on the diameter of $\Omega$.
	This completes the proof.
\end{proof}

Now we are ready to prove Theorem \ref{thm}.
\begin{proof}	
	Define the level set
	\[
	\Omega_R := \{y\in \mathbb{R}^3 : u(Ry) \leq R^2\}.
	\]
	By the quadratic growth condition, for $y \in \Omega_{R}$ with $|y|\geq R_0/R$,  we derive
	$|y|^2 \leq \frac{C_2 + 1}{C_1}$.
	Therefore, the set $\Omega_R$ is bounded:
	\[
	\mbox{diam} (\Omega_R) \leq \max\{\frac{R_0}{R}, \frac{C_2 + 1}{C_1}\}.
	\]
	Define a new function $v$ on $\Omega_R$ by
	\[
	v (y) = \frac{u(Ry) - R^2}{R^2}.
	\]
	Then, $v$ satisfies 
	\begin{equation}
		\label{eqn-v}
		M_2 (D^2 v) = 1 \; \mbox{in}\; \Omega_R,\;
		\mbox{and}\; v = 0 \; \mbox{on}\; \partial \Omega_R.  
	\end{equation}
By the maximum principle, $a |x|^2 - b$ is a subsolution of the above equation for sufficiently large $a$ and $b$ depending on the diameter of $\Omega_R$.
	Hence , we get 
	\begin{equation}\label{C0}
		\sup_{\Omega_R}|v| \leq C,
	\end{equation}
	where $C$ depends on $\mbox{diam}(\Omega_R)$.
	By Lemma \ref{Pogrelov}, we obtain
	\[
	(-v)^{\beta} \Delta v \leq C
	\]
	for $C$ and $\beta$ depending on $C_1$ and $C_2$.
	
	Next, for $y$ belongs to
	\[
	\Omega_{\frac{R}{2}} := \{y\in \mathbb{R}^n : u(Ry) \leq \frac{R^2}{2}\},
	\] 
	we obtain that $v(y) \leq \frac{R^2/2 - R^2}{R^2} = - \frac{1}{2}$.
	Therefore, we derive that
	\[
	\Delta v (y) \leq 2^\beta C, \; \forall\; y\in \Omega_{\frac{R}{2}},
	\]
	where $C$ depends on $C_1$ and $C_2$.
	Note that $D^2_y v = D^2_x u$. The above inequality shows that
	\[
	\Delta u (x)\leq C, \; \forall \; x \in \{ x\in \mathbb{R}^n : u(x) \leq \frac{R^2}{2} \},
	\]
	for a uniform positive constant $C$. Letting $R$ go to infinity, we 
	have 
	\[
	\Delta u \leq C \; \mbox{on}\; \mathbb{R}^3, 
	\] 
	where $C$ is a uniform constant.
	Now, by the Evans-Krylov theorem \cite{GT} (see (17.41)),
	we have, for some $\alpha \in (0,1)$ and positive constant $C$ depending on $||D^2 u||_{C^0(\mathbb{R}^3)}$, that
	\[
	|D^2 u|_{C^\alpha (B_R)} \leq C \frac{\sup_{B_{2R}} |D^2 u|}{R^\alpha} \rightarrow 0 
    \]
	as $R \rightarrow + \infty$. Therefore, $u$ has to be a quadratic polynomial.

\end{proof}

\end{document}